\newcommand\numberthis{\addtocounter{equation}{1}\tag{\theequation}}
\title{On the robustness of posterior means}
\author{Jiafeng Chen}
\date{\today. }
\begin{document}

\maketitle

Consider an observation $X$ whose likelihood is $X \mid \theta \sim \Norm(\theta,
\sigma^2)$ for some known
$\sigma^2$. There are two priors for $\theta$, denoted by $G_0$ and $G_1$. Suppose both
priors have mean zero and have finite variances bounded by $V > 0$. Consider the decision
problem of estimating $\theta$ under squared error, with $L(a,\theta) = (a-\theta)^2$. For
the Bayesian with prior $G_1$, the Bayes decision rule is the posterior mean $\E_{G_1}
[\theta \mid X]$ under the prior $G_1$. This decision rule attains Bayes risk under the
prior $G_0$\[
R(G_1, \sigma; G_0) \equiv \E_{\theta \sim G_0}\bk{
	(\E_{G_1}[\theta \mid X] - \theta)^2
}. 
\] We can think of $R(G_1, \sigma; G_0)$ as a measure of decision quality under
 disagreement. It measures the quality of $G_1$'s decision from $G_0$'s point of view.
 When $G_1 \neq G_0$, how large can $R (G_1,
\sigma; G_0)$ be?

This note bounds $R(G_1, \sigma; G_0)$ uniformly over $G_1, G_0, \sigma$.\footnote
{The previous version of this note (arXiv:2303.08653v1) claimed that $R
(G_1, \sigma; G_0)$ is uniformly bounded over all $G_0, G_1, \sigma>0$, subjected to the
constraints on the moments of $G_0,G_1$. Regrettably, it
contained a critical error that rendered its proof incorrect. In particular, in that
version, the display before (A1) on p.4 is incorrect: Posterior means of mixture priors
are mixtures of posterior means under each mixing component, but the mixing weights are
posterior probabilities
assigned to each mixing component; thus, the mixing weights depend on the data rather
than being fixed.

This version partially restores the main result in the previous version. \Cref{thm:main}
shows that the maximum Bayes risk under $G_0$ is uniformly bounded over all $G_0, G_1,
\sigma^2$ where $G_1$ satisfies an additional tail condition \eqref{eq:g1_cond}. The
bound we obtain depends on the tail condition, and thus \cref{thm:main} is insufficient
for the result in the previous draft.
}
Since \[ R(G_1, \sigma; G_0) \le 2\pr{
\E_{G_0}[\E_{G_1}[\theta \mid X]^2] + \E_{G_0}[\theta^2] } \le 2V + 2\E_{G_0}[\E_{G_1}
 [\theta \mid X]^2],\] it thus suffices to bound $\E_{G_0}[\E_{G_1}[\theta \mid X]^2]$
 modulo constants. That is,  it suffices to bound the 2-norm $\norm{\E_{G_1}
 [\theta \mid X]}^2$ under the law $X \sim \Norm(0,\sigma^2)
\star G_0$. 

The rest of the note shows that this quantity is uniformly bounded over all $G_0,
G_1, \sigma^2 > 0$. Specifically, \cref{lemma:bound_small_sigma} shows that for all $G_0,
G_1$ that are mean zero and have variance bounded by $V$, $\E_{G_0}[\E_{G_1}[\theta
\mid X]^2]$ is bounded by a constant that depends only on $(V, \sigma^2)$. This bound is
 large when $\sigma^2$ is large. To improve this bound, \cref{thm:main} then shows that,
 if $G_1$ additionally satisfies some conditions on its tail behavior, $\E_ {G_0}[\E_
 {G_1}[\theta
\mid X]^2]$ is bounded by a constant that depends only on $V$ and the tail condition---and
does not depend on $\sigma$.

\begin{lemma}
\label{lemma:bound_small_sigma}
Suppose $G_0, G_1$ have mean zero and variances bounded by $V$, then
\[
\E_{G_0}[\E_{G_1}[\theta \mid X]^2] \le 6V + 4\sigma^2
\]
uniformly over $G_0, G_1, \sigma^2$. 
\end{lemma}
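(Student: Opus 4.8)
The plan is to split the $G_1$-posterior mean into the raw observation plus the shrinkage it induces and to bound the two pieces separately. Writing $m_1(x) = \E_{G_1}[\theta \mid X = x]$ and using $m_1(X)^2 \le 2X^2 + 2(m_1(X) - X)^2$, the first piece is immediate: under $X \sim \Norm(0,\sigma^2)\star G_0$ we have $\E_{G_0}[X^2] = \mathrm{Var}_{G_0}(\theta) + \sigma^2 \le V + \sigma^2$, since $G_0$ is mean zero with variance at most $V$. Thus it remains to bound the shrinkage term $\E_{G_0}[(m_1(X) - X)^2]$ by a constant of the form $cV + c'\sigma^2$; combining the two then yields a bound of the advertised shape, and tracking constants gives $6V + 4\sigma^2$ provided the shrinkage term is at most $2V + \sigma^2$.

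For the shrinkage term I would first apply conditional Jensen to the $G_1$-posterior: since $m_1(x) - x = \E_{G_1}[\theta - x \mid X = x]$, we get $(m_1(x)-x)^2 \le \E_{G_1}[(\theta - x)^2 \mid X = x]$, and hence
\[
\E_{G_0}[(m_1(X) - X)^2] \le Q := \E_{G_0}\bk{\E_{G_1}[(\theta - X)^2 \mid X]}.
\]
The key simplification is a Gaussian second-moment identity: with $\phi_\sigma$ the $\Norm(0,\sigma^2)$ density and $p_i(x) = \int \phi_\sigma(x-\theta)\,dG_i(\theta)$ the marginals, one has $\int (\theta - x)^2 \phi_\sigma(x-\theta)\,dG_1(\theta) = \sigma^2 p_1(x) + \sigma^4 p_1''(x)$, so that $\E_{G_1}[(\theta - x)^2\mid x] = \sigma^2 + \sigma^4 p_1''(x)/p_1(x)$, and integrating against $p_0$ gives $Q = \sigma^2 + \sigma^4 \int (p_1''/p_1)\,p_0\,dx$. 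Equivalently, substituting the noise variable $u = x - \theta$ and using Fubini, $Q = \int u^2 \phi_\sigma(u)\, h(u)\,du$, where $h(u) = \E_{\theta\sim G_1}\bk{p_0(\theta + u)/p_1(\theta+u)}$ is nonnegative and satisfies the normalization $\int \phi_\sigma(u)\, h(u)\,du = 1$. In this form $Q$ is $\sigma^2$ times a $\phi_\sigma$-average of $u^2/\sigma^2$ reweighted by $h$, and the whole task is to show that this reweighting cannot inflate the second moment by more than an additive $O(V)$.

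The main obstacle is exactly this likelihood-ratio control: the density ratio $p_0/p_1$ is generically unbounded---for instance when $G_0$ has wider support than $G_1$ it blows up in the tails---so one cannot bound $Q$ by pulling $p_0/p_1$ out of the integral, nor by the crude split $(\theta - x)^2 \le 2\theta^2 + 2x^2$, which reintroduces the unbounded posterior second moment $\E_{G_1}[\theta^2\mid x]$. What I expect to save the estimate is that $p_0$ and $p_1$ are convolutions of mean-zero, variance-$\le V$ laws with the \emph{same} Gaussian $\Norm(0,\sigma^2)$, so their tails are governed by the common Gaussian factor: the growth of $\E_{G_1}[(\theta - x)^2\mid x]$, which is $\approx x^2$ far out in the tail, is absorbed by the matching Gaussian decay of $p_0(x)$, while the Chebyshev bounds $G_i(\abs{\theta} > T)\le V/T^2$ control the priors' own tails. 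Turning this heuristic into the uniform bound $Q \le 2V + \sigma^2$---equivalently $\sigma^4\int (p_1''/p_1)\,p_0\,dx \le 2V$---is the technical heart of the argument; the hard part will be estimating it cleanly, and I would attempt this by splitting the integral at a threshold of order $\sqrt{V}$, or by comparing $h$ to $1$ through its normalization, and handling the tail piece via the Gaussian moment identity above. Once $Q$ is controlled, the two pieces recombine to give the stated constant.
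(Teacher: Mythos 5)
Your opening reduction $m_1(X)^2 \le 2X^2 + 2(m_1(X)-X)^2$ and the computation $\E_{G_0}[X^2] \le V + \sigma^2$ coincide with the paper's first and last steps, and your target for the shrinkage term, $\E_{G_0}[(m_1(X)-X)^2] \le 2V + \sigma^2$, is exactly what the paper's argument delivers. But your treatment of that term has a genuine gap, and it is the entire content of the lemma. The conditional Jensen step $(m_1(x)-x)^2 \le \E_{G_1}[(\theta-x)^2 \mid x]$ is lossy in a specific way: since $\E_{G_1}[(\theta-x)^2\mid x] = (m_1(x)-x)^2 + \mathrm{Var}_{G_1}(\theta\mid x)$, you have added the misspecified posterior variance to the quantity you must control, and you then leave the bound on $Q = \E_{G_0}\bk{\E_{G_1}[(\theta-X)^2\mid X]}$ explicitly open (``the technical heart''). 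Neither of your two suggested completions works as stated. The normalization $\int \phi_\sigma(u) h(u)\,du = 1$ alone cannot bound $\int u^2 \phi_\sigma(u) h(u)\,du$: a nonnegative $h$ with $\phi_\sigma$-mean one can concentrate its mass near $|u| = T$ and make the reweighted second moment of order $T^2$ for arbitrary $T$, so any proof must exploit structural properties of $h(u) = \E_{\theta \sim G_1}\bk{p_0(\theta+u)/p_1(\theta+u)}$ beyond normalization, and you have not identified which. Likewise, splitting at a threshold of order $\sqrt{V}$ stalls on the tail piece: the available pointwise bound $\E_{G_1}[\theta^2 \mid x] \le V\sqrt{2\pi}\sigma\,\sup_\theta f_X(x\mid\theta)\cdot e^{(x^2+V)/(2\sigma^2)}/\sqrt{2\pi}\sigma \le V e^{(x^2+V)/(2\sigma^2)}$ (from \cref{lemma:jensen_denom}) grows exactly fast enough to cancel the Gaussian factor in $p_0$, so the tail integral does not converge without new input. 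As written, then, the proposal reduces the lemma to an unproven claim.

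The missing idea is that the shrinkage term admits a closed form that can be bounded \emph{pointwise}, with no detour through the posterior second moment: Tweedie's formula gives $m_1(x) - x = \sigma^2 p_1'(x)/p_1(x)$, the score bound of \citet{jiang2020general} gives $\pr{p_1'/p_1}^2 \le \sigma^{-2} \log\pr{1/(2\pi\sigma^2 p_1^2)}$, and the Jensen lower bound of \cref{lemma:jensen_denom} turns this into $(m_1(x)-x)^2 \le x^2 + V$, whence $\E_{G_0}[(m_1(X)-X)^2] \le 2V + \sigma^2$ directly. If you insist on your route through $Q$, it can be completed, but only by reintroducing precisely this score control: your identity $\E_{G_1}[(\theta-x)^2\mid x] = \sigma^2 + \sigma^4 p_1''(x)/p_1(x)$ is correct, and writing $\mathrm{Var}_{G_1}(\theta\mid x) = \sigma^2 + \sigma^4(\log p_1)''(x)$, integrating by parts so that $\sigma^4\int (\log p_1)'' p_0 = -\sigma^4 \int (p_1'/p_1)(p_0'/p_0)\,p_0$, and applying Cauchy--Schwarz with the Fisher-information bound $\int (p_0'/p_0)^2 p_0 \le \sigma^{-2}$ (valid for Gaussian location mixtures, by conditional Jensen applied to the score) together with the Tweedie/Jiang bound on $p_1'/p_1$ yields $\E_{G_0}[\mathrm{Var}_{G_1}(\theta\mid X)] \le \sigma^2 + \sigma\sqrt{2V+\sigma^2}$. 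That makes $Q$ finite and of the form $cV + c'\sigma^2$, but strictly larger than your hoped-for $2V + \sigma^2$, so this route does not recover the stated constant $6V + 4\sigma^2$ --- though a bound of this weaker form would still suffice for the only use of the lemma in \cref{thm:main}, which invokes it solely for $\sigma^2 < 1$.
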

\begin{proof}
Let $f_{G,\sigma}(x) = \int f_X(x \mid \theta) \,G(d\theta)$.
\citet{jiang2020general} (Lemma 1) shows that \[
\pr{\frac{f'_{G,\sigma}(x)}{f_{G,\sigma}(x)}}^2 \le \frac{1}{\sigma^2} \log \pr{\frac{1}
{2\pi \sigma^2 f^2_{G,\sigma}(x)}}.
\]
Plugging in the bound \eqref{eq:jensen_denom} in \cref{lemma:jensen_denom}, we have that
pointwise in $X$, \[
\pr{\E_{G_1}[\theta \mid X] - X}^2 = \pr{\sigma^2\frac{f'_{G_1,\sigma}(x)}{f_
{G_1,\sigma} (x)}}^2 \le \sigma^4 \frac{1}
{\sigma^2} \frac{X^2 + V}
{\sigma^2} = X^2 + V,
\]
where the first equality is due to Tweedie's formula. 

Now, note that \[
(\E_{G_1}[\theta \mid X])^2 \le 2 \pr{
	(\E_{G_1}[\theta \mid X] - X)^2 + X^2
} \tag{$(a+b)^2 \le 2(a^2 + b^2)$}.
\]
Hence, \[
\E_{G_0}\bk{
	\E_{G_1}[\theta \mid X])^2
} \le 2 \E_{G_0}[2X^2+V] \le 2(2(V+\sigma^2) + V) = 6V + 4\sigma^2. \qedhere
\]
\end{proof}

To show a more powerful bound, we require a stronger condition on the tails of $G_1$ and
derive bounds that are independent of $\sigma$ but are dependent on the tail conditions.
In particular, assume
\[ \max\pr{1-G_1(s), G_1(-s) }\le C_{G_1} s^{-k} \numberthis\label{eq:g1_cond}
\]
for some $k > 2$ and $C_{G_1} > 0$, for all $s > 0$. We will also assume that $\E
_{G_1}[\theta^2 \mid X]$ exists almost surely. Note that if $\E_{G_1} |\theta|^
{2+\epsilon} < m$, then $k$ can be taken to be $2+\epsilon$ and $C_{G_1}$ can be taken to
be $m$ by Markov's inequality. In the rest of the proof, we let $C_t < \infty$ denote a
positive constant that depends only on $t$. Occurrences of
$C_t$ might correspond to different constant values.

\begin{theorem}
\label{thm:main}
Suppose $k > 2.$ There exists a constant $Q < \infty$ that depends solely on $(C_ {G_1},
k, V)$ such that, uniformly for all $(G_0, G_1)$ and $\sigma \in \R$, where (i) $G_0,G_1$
have mean zero and variance bounded by $V$ and (ii) $G_1$ satisfies
\eqref{eq:g1_cond} with $(C_{G_1},
k)$ where $\E_{G_1}[\theta^2 \mid X]$ exists for almost every $X$, \[
\E_{G_0}\bk{\E_{G_1}[\theta\mid X]^2} \le Q.
\]
\end{theorem}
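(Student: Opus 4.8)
The plan is to pass to the conditional second moment. By the conditional Cauchy--Schwarz (Jensen) inequality, $\E_{G_1}[\theta \mid X]^2 \le \E_{G_1}[\theta^2 \mid X]$, and the right-hand side exists almost surely by hypothesis, so it suffices to bound $\E_{G_0}\bk{\E_{G_1}[\theta^2 \mid X]}$. Writing $\psi(x) := \E_{G_1}[\theta^2 \mid x] = f_{G_1,\sigma}(x)^{-1}\int \theta^2 f_X(x\mid\theta)\,G_1(d\theta)$, I would split $\int \psi(x)\, f_{G_0,\sigma}(x)\,dx$ at a threshold $x_\star \asymp \sigma\sqrt{\log \sigma}$ and treat the two ranges separately. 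Throughout I may assume $\sigma \ge 1$, since \cref{lemma:bound_small_sigma} already supplies the bound $6V + 4\sigma^2$, which is admissible for bounded $\sigma$.

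Two ingredients feed both ranges. First, the denominator admits the Jensen lower bound $f_{G_1,\sigma}(x) \ge \tfrac{1}{\sqrt{2\pi}\sigma}e^{-(x^2+V)/2\sigma^2}$, exactly \eqref{eq:jensen_denom}, and, by Chebyshev, $G_1([-\tau_0,\tau_0]) \ge \tfrac12$ for $\tau_0 = \sqrt{2V}$, so the bulk of $G_1$ near the origin also controls $f_{G_1,\sigma}(x)$ from below. Second, the tail hypothesis \eqref{eq:g1_cond} enters only through the elementary consequence $\int_{|\theta|>s}\theta^2\,G_1(d\theta) \le C_k\, s^{2-k}$, finite precisely because $k > 2$; this is the quantitative statement that $G_1$ cannot place much mass at intermediate or large scales.

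For $|x| > x_\star$ I would use the crude pointwise bound $\psi(x) \le 4x^2 + C_k$: splitting the numerator of $\psi$ at $|\theta| = 2|x|$ bounds the near part by $4x^2\, f_{G_1,\sigma}(x)$, while on the far part $|x-\theta| > |\theta|/2$ gives $f_X(x\mid\theta) \le \tfrac{1}{\sqrt{2\pi}\sigma}e^{-\theta^2/8\sigma^2}$, and dividing by the Jensen bound turns the far numerator into $e^{V/2\sigma^2}\int_{|\theta|>2|x|}\theta^2\,G_1(d\theta) \le C_k$. Integrating, this range contributes at most $4\,\E_{G_0}\bk{X^2\,\mathbf{1}\{|X|>x_\star\}} + C_k$, which is where the choice of $x_\star$ is calibrated: writing $X = \theta_0 + \sigma Z$, the event $\{|X|>x_\star\}$ driven by the Gaussian noise has probability $\lesssim \sigma^{-k}$ at scale $x_\star \asymp \sigma\sqrt{\log\sigma}$, so that piece contributes $\lesssim x_\star^2\,\sigma^{-k}\to 0$ exactly when $k>2$, while the piece driven by a large $\theta_0$ is absorbed into the variance budget $\E_{G_0}\theta_0^2 \le V$. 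The subtlety is to localize $G_0$'s mass through $\PP(|\theta_0|>s)\le V/s^2$ rather than bounding its total mass by one; the latter loses a constant factor in the Gaussian exponent and would force $k>8$ rather than $k>2$.

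The central range $|x| \le x_\star$ is the main obstacle. Here $\psi$ is \emph{not} uniformly bounded: at $x \approx x_\star$ it can be as large as $x_\star^2 \asymp \sigma^2\log\sigma$ when $G_1$ places its maximal permitted mass near $\theta \approx x_\star$, so no pointwise constant bound exists, and one cannot replace $f_{G_0,\sigma}$ by its supremum $\tfrac{1}{\sqrt{2\pi}\sigma}$ (that makes the integral diverge). Instead I would retain the genuine decay of $f_{G_0,\sigma}$ and exploit the cancellation between the size of $\psi$ and the smallness of its weight. Below the crossover scale $x_\star$, the permitted mass $C_{G_1}s^{-k}$ at scale $s$ is dominated by the Gaussian penalty $e^{-s^2/2\sigma^2}$, so the bulk near the origin controls the denominator and the intermediate-$\theta$ contribution to $\psi$ is held in check by $\int_{|\theta|>\tau_0}\theta^2\,G_1(d\theta)\le C_k$; near $x_\star$, where $\psi(x)\asymp x^2$, the product $\psi(x)\,f_{G_0,\sigma}(x)$ remains integrable because the same crossover balance forces $f_{G_0,\sigma}(x)$ to be small there (Gaussian-small when $G_0$ has no mass near $x_\star$, and paid for by $G_0$'s variance budget when it does). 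Carrying this out so that the exponents meet at the sharp threshold $k>2$, rather than at the larger value produced by crude constant factors, is the crux of the argument.
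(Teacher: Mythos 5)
Your reduction and your treatment of the outer range $|x| > x_\star$ are sound --- indeed your pointwise bound $\psi(x) \le 4x^2 + C$ holds for \emph{all} $x$ (and, as a check, its far part needs only $\operatorname{Var}_{G_1}(\theta) \le V$, not \eqref{eq:g1_cond}, since $e^{x^2/2\sigma^2 - \theta^2/8\sigma^2} \le 1$ when $|\theta| > 2|x|$). But the central range $|x| \le x_\star$, which you yourself identify as ``the main obstacle'' and ``the crux,'' is never proved: the final paragraph is a list of desiderata (``exploit the cancellation,'' ``paid for by $G_0$'s variance budget when it does''), not an argument, and this is exactly where the theorem's content lives --- your own bounds up to that point only reproduce the $\sigma$-dependent estimate of \cref{lemma:bound_small_sigma}. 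Concretely, via \eqref{eq:jensen_denom} one has $\psi(x) \le e^{(x^2+V)/2\sigma^2}\int \theta^2 e^{-(x-\theta)^2/2\sigma^2}\,G_1(d\theta)$, and for $\sigma < |x| \le x_\star$ the amplification factor $e^{x^2/2\sigma^2}$ is polynomially large in $\sigma$ (up to $\sigma^{C^2/2}$). Your two stated ingredients --- the Chebyshev bulk of $G_1$ and $\int_{|\theta|>s}\theta^2\,G_1(d\theta) \le C s^{2-k}$ --- cannot beat this factor on their own, because the required balance couples the truncation level in $x$ to the tail level $s$ of $\theta$; a single fixed threshold $x_\star(\sigma)$ does not encode that coupling. (Two smaller slips: $\E_{G_0}\bk{X^2 \mathbf{1}\{|X|>x_\star\}}$ is not bounded by $x_\star^2\,\PP(|X|>x_\star)$, you need the full tail integral, though this is repairable for Gaussian noise; and your calibration story attributes the $k>2$ threshold to the outer range, where in fact any fixed constant $C$ in $x_\star = C\sigma\sqrt{\log\sigma}$ works regardless of $k$ --- the sharpness at $k>2$ arises only in the central range you left open.)

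For contrast, the paper never splits on $x$ at a fixed threshold. It first applies the layer-cake identity $\E_{G_1}[\theta^2\mid X] = 2\int_0^\infty s\,P_{G_1}(|\theta|>s\mid X)\,ds$, splits the $G_0$-integral on $|\mu| \lessgtr \sigma$ (handling $|\mu|>\sigma$ by a direct argument), and then, for each tail level $s > K$, truncates $x$ at the \emph{$s$-dependent} level $u = k\sigma\sqrt{\log s}$. The key computation is $\int_{|x|\le u} e^{x^2/2\sigma^2} f_X(x\mid\mu)\,dx \le C\,e^{-\mu^2/2\sigma^2}\,\sinh(\mu u/\sigma^2)\,\sigma/\mu \le C e^{u/\sigma} = C s^{k/\sqrt{\log s}} \le C s^{(k-2)/2}$ once $s \ge K = \exp\pr{(2k)^2/(k-2)^2}$, while the discarded region costs $\bar\Phi(u/2\sigma) \le s^{-k^2/2}$ by \cref{lemma:millsratio}. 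Multiplying the first term by $1-G_1(s) \le C_{G_1} s^{-k}$ gives integrand $s \cdot s^{-k} \cdot s^{(k-2)/2} = s^{-k/2}$, integrable precisely when $k>2$ --- this coupled truncation is the mechanism your ``cancellation'' paragraph gestures at but does not supply. Until you carry out the central range with some such device, the proposal is a correct reduction plus a plan, not a proof.
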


\begin{proof}
Assume that $ \sigma^2 \ge 1 $. For all $\sigma^2 < 1$, we can apply 
\cref{lemma:bound_small_sigma}
so that $\E_{G_0}\bk{\E_{G_1}[\theta\mid X]^2} \le 6V + 4$. 

Observe that \begin{align*}
\E_{G_0}\bk{\E_{G_1} [\theta \mid X]^2} &\le \E_{G_0} \bk{\E_{G_1}[\theta^2 \mid X]}  \tag{Jensen's
inequality} \\
&= \E_{G_0}\bk{\int_{0}^\infty P_{G_1}(\theta^2 > t \mid X) \,dt} \\
&= 2\E_{G_0}\bk{\int_0^\infty s P_{G_1}(|\theta| > s \mid X)\,ds \tag{Change of variable $s
=
\sqrt{t}$}}
\\
&= 2\E_{G_0}\bk{\int_0^\infty s P_{G_1}(\theta > s \mid X)\,ds + \int_0^\infty s P_{G_1}
(-\theta > -s
\mid X)\,ds}.
\end{align*}
Therefore, it suffices to bound the first term, since the second term follows by a
symmetric argument. We do so in the remainder of the proof.

Writing out the first term as an integral:
\begin{align*} &\E_{G_0}
\bk{\int_0^\infty s P_ {G_1} (\theta > s \mid
X)\,ds} \\ &= \int_ {\mu=-\infty}^\infty\int_{x=-\infty}^\infty \int_{s=0}^\infty  s
P_{G_1}[\theta > s
\mid
X=x]\, ds f_X (x\mid \mu)
\,dx\,G_0(d\mu)\\
&= \int_
{\mu=-\infty}^\infty  \int_{s=0}^\infty s \int_{x=-\infty}^\infty P_{G_1}[\theta > s
\mid
X=x]  f_X (x\mid \mu) \,dx \,ds \,G_0(d\mu). \tag{Fubini's theorem}
\end{align*}

The outer integral in $\mu$ can be decomposed into $|\mu| \le \sigma$ and $|\mu| >
\sigma$: \begin{align}
&\E_{G_0}\bk{\int_0^\infty s P_ {G_1} (\theta > s \mid
X)\,ds} \nonumber\\
&= \int_
{|\mu| > \sigma}  \int_{s=0}^\infty s \int_{x=-\infty}^\infty P_{G_1}[\theta > s
\mid
X=x]  f_X (x\mid \mu) \,dx \,ds \,G_0(d\mu)\label{eq:small_sigma}
\\
&\quad+ \int_
{|\mu| < \sigma} \int_{s=0}^\infty s \int_{x=-\infty}^\infty P_{G_1}[\theta > s
\mid
X=x]  f_X (x\mid \mu) \,dx \,ds \,G_0(d\mu) \label{eq:large_sigma}
\end{align}

First, we consider \eqref{eq:small_sigma}. Decompose the integral in $x$ further along $x
\le s/2$ and $x > s/2$: 
\begin{align}
\eqref{eq:small_sigma} &= \int_{|\mu| > \sigma} \int_0^\infty s \int_{s/2}^{\infty}P_{G_1}
(\theta > s
\mid X=x) f_X(x\mid \mu) \,dx \,ds \,G_0(d\mu)\label{eq:term3}
\\&\quad + \int_{|\mu| > \sigma} \int_0^\infty s \int_{-\infty}^{s/2} P_{G_1}
(\theta > s
\mid X=x) f_X(x\mid \mu) \,dx \,ds\,G_0(d\mu) \label{eq:term2}
\end{align}

For large $\mu$ and large $x$ \eqref{eq:term3}, we have that
\begin{align*}
\eqref{eq:term3} &\le \int_{|\mu| > \sigma} \int_0^\infty s \int_{s/2}^{\infty} f_X(x\mid
\mu) \,dx ds G_0(d\mu) \tag{$P_{G_1}
(\theta > s
\mid X=x) \le 1$}\\
&= \int_{|\mu| > \sigma} \int_0^\infty s P(X > s/2 \mid \mu)\,ds \,G_0(d\mu) \\ 
&\le C\int_{|\mu| > \sigma} \underbrace{\E[X^2 \mid \mu]}_{\mu^2 + \sigma^2 \le
2\mu^2}\, G_0(d\mu)  \le C \int
2\mu^2
\,G_0(d\mu) \le C_{V}.\tag{$\int 2sP(X > s \mid \mu)\,ds \le \E[X^2 \mid \mu]$}
\end{align*}

For large $\mu$ and small $x$ \eqref{eq:term2}, note that for $x < s/2< s$, by 
\cref{lemma:jensen_denom}
\begin{align*}
P_{G_1}(\theta > s \mid X=x) &\le C_{V} e^{x^2/(2\sigma^2)} e^{-\frac{1}{2\sigma^2}(x-s)^2}
(1-G_1(s)). \tag{$f_X(x\mid \theta) \le \frac{1}{\sqrt{2\pi} \sigma} e^{-\frac{1}
{2\sigma^2} (x-s)^2}$}
\end{align*}
Now, integrating the above display with respect to $f_X(x\mid \mu)\,dx$ yields \[
 \int_{-\infty}^{s/2}P_{G_1}
(\theta > s
\mid X=x) f_X(x\mid \mu) \,dx \le C_{V}(1-G_1(s)) \cdot \frac{\sigma^2}{s} \le 
C_{V}(1-G_1(s)) \frac{\mu^2} {s} \tag{$|\mu| > \sigma$ for \eqref{eq:small_sigma}}
\]
Finally, integrating it again with respect to $s$ yields \[
\int_0^\infty s \times C_{V}(1-G_1(s)) \frac{\mu^2} {s} \,ds = C_V \mu^2 \int_{0}^\infty
1-G_1(s)\,ds \le C_V \mu^2 \E_{G_1}[|\theta|] \le C_V \mu^2.
\]
Therefore, \[
\eqref{eq:term2} \le  C_V \E_{G_0}\mu^2 \le C_V.
\]
This shows that \eqref{eq:small_sigma} is uniformly bounded.

Moving on to \eqref{eq:large_sigma}, we first decompose the integral on $s$ into $s \le K$
and $s > K$, for some $K \ge e$ to be chosen: \[
\eqref{eq:large_sigma} \le \underbrace{\int_{|\mu| < \sigma} \int_0^K s \,ds \,G_0(d\mu)}_
{\le K^{2}/2} + \int_
{|\mu| < \sigma} \int_K^\infty s \int_{-\infty}^\infty P_{G_1}(\theta > s \mid X=x) f_X
(x\mid \mu) \,dx\,G_0(d\mu) \numberthis \label{eq:large_sigma_bound}
\] Thus we focus on the piece where $s > K$. Fix \[u = C \sigma \sqrt{\log (s)}\] for some
 $C
\ge 2$ to be chosen. On $s > K$,
$u/\sigma > 2$ and thus $\frac{u}{\sigma} - 1 > \frac{u}{2\sigma}$.
Observe that by \cref{lemma:jensen_denom} and the fact that $\sigma > 1$, \[
P_{G_1}(\theta > s \mid X=x) \le C_V \exp\pr{\frac{x^2}{2\sigma^2}} (1-G(s)). \numberthis
\label{eq:survival_bound}
\]
Therefore, \begin{align*}
&\int_{-\infty}^\infty P_{G_1}(\theta > s \mid X=x) f_X
(x\mid \mu)\,dx \\
&\le \int_{|x| \le u} P_{G_1}(\theta > s \mid X=x) f_X
(x\mid \mu)\,dx + \P(|X|>u \mid \mu) \tag{$P_{G_1}(\theta > s \mid X=x) \le 1$}\\
&\le C_V e^{-\mu^2/(2\sigma^2)}(1-G(s)) \bk{
	\frac{\sinh\pr{\frac{\mu}{\sigma} \frac{u}{\sigma}}}{\mu/\sigma}
} + 2\underbrace{\bar\Phi\pr{\frac{u}{\sigma} - \frac{|\mu|}{\sigma}}}_{\le \bar\Phi
\pr{u/\sigma - 1} \le \bar\Phi\pr{\frac{u}{2 \sigma}}} \tag{$\bar\Phi$ is the
complementary Gaussian CDF}\\
&\le C_V(1-G(s)) \frac{\sinh\pr{\frac{\mu}{\sigma} \frac{u}{\sigma}}}{\mu/\sigma} +
2\bar\Phi\pr{\frac{u}{2 \sigma}} \numberthis \label{eq:intermediate_large_sigma}
\end{align*}
where the second inequality follows from directly integrating the upper bound 
\eqref{eq:survival_bound} within $|x| \le u$. Now, observe that, for $|c| < 1$ and $t >
0$, \begin{align*}
t \frac{\sinh(ct)}{ct} &\le t \frac{\sinh(|c|t)}{|c|t} \tag{$\sinh(x)/x$ is an even
function}  \\
&\le t \frac{\sinh(t)}{t} \tag{$\sinh(x)/x$ is an increasing function on $x > 0$} \\
&\le \frac{1}{2}e^{t} \tag{$\sinh(x) = (e^x-e^{-x})/2\le \frac{1}{2}e^x$}.
\end{align*}
Therefore, \begin{align*}
\eqref{eq:intermediate_large_sigma} &\le C_V (1-G(s)) \exp\pr{\frac{C}{\sqrt{\log s}} \log
s} + 2 \bar\Phi
(C \sqrt{\log s}) \\
&\le C_V (1-G(s)) \exp\pr{\frac{C}{\sqrt{\log s}} \log
s} + \exp\pr{-\frac{C^2}{2} \log s} \tag{\cref{lemma:millsratio}}.
\end{align*}
Choose $C = k$ and $ K = \exp\pr{1 \vee \frac{(2C)^2}{(k-2)^2}}$. This yields that, for
$s > K$, \[
\frac{C}{\sqrt{\log s}}  \le \frac{k-2}{2} \quad \frac{C^2}{2} = \frac{k^2}{2}.
\]
Hence, integrating with respect to $s$: \begin{align*}
&\int_K^\infty s \int_{-\infty}^\infty P_{G_1}(\theta > s \mid X=x) f_X
(x\mid \mu)\,dx\,ds \\
&\le \int_K^\infty s 
\pr{C_V (1-G(s)) \exp\pr{ \frac{k-2}{2} \log
s} + \exp\pr{-\frac{k^2}{2} \log s}} \,ds \\
&\le C_{V} C_{G_1} \int_K^\infty s^{1-k+\frac{k-2}{2}}\,ds + \int_{K}^\infty s^{-k^2/2 +1}
\,ds \\
&\le C_V C_{G_1} C_k + C_k. \tag{$1-k+(k-2)/2 < -1$ and $-k^2/2+1 < -1$}
\end{align*}
as both integrals converge and depend only on $k > 2$. Returning to 
\eqref{eq:large_sigma_bound}, this shows that \eqref{eq:large_sigma} is uniformly bounded
with a constant that depends only on $V, C_{G_1}, k$. This concludes the proof.
\end{proof}

\begin{lemma}
\label{lemma:jensen_denom}
Suppose $G_1$ has mean zero and variance bounded by $V$. Let \[
f_{G_1, \sigma}(x) \equiv \int f_X(x\mid \theta) \, G_1(d\theta).
\] Then, \[
f_{G_1,\sigma}(x) \ge \frac{1}{\sqrt{2\pi}\sigma} \exp\pr{-\frac{x^2 + V}
{2\sigma^2}}
\text{, or }
\frac{1}{f_{G_1, \sigma}(x)} \le \sqrt{2\pi} \sigma \exp\pr{\frac{x^2 + V}{2\sigma^2}}.
\numberthis \label{eq:jensen_denom}
\]
\end{lemma}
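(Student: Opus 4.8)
The plan is to prove \eqref{eq:jensen_denom} by a single application of Jensen's inequality, exactly as the lemma's label suggests. First I would pull the normalizing constant out of the integral and rewrite the marginal density as an expectation over $\theta \sim G_1$:
\[
f_{G_1,\sigma}(x) = \frac{1}{\sqrt{2\pi}\sigma} \int \exp\pr{-\frac{(x-\theta)^2}{2\sigma^2}} \, G_1(d\theta) = \frac{1}{\sqrt{2\pi}\sigma} \E_{G_1}\bk{\exp\pr{-\frac{(x-\theta)^2}{2\sigma^2}}},
\]
so that the only task is to lower-bound the expectation on the right.

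The one point requiring care is to identify the correct convex function. The map $\theta \mapsto \exp(-(x-\theta)^2/(2\sigma^2))$ is \emph{not} convex, so Jensen cannot be applied to it directly in $\theta$; instead I would apply Jensen to the auxiliary function $\phi(u) = e^{-u/(2\sigma^2)}$, which is convex and decreasing on $u \ge 0$, evaluated at the nonnegative random variable $u = (x-\theta)^2$ with $\theta \sim G_1$ and $x$ fixed. This gives
\[
\E_{G_1}\bk{\exp\pr{-\frac{(x-\theta)^2}{2\sigma^2}}} = \E_{G_1}[\phi((x-\theta)^2)] \ge \phi\pr{\E_{G_1}[(x-\theta)^2]} = \exp\pr{-\frac{\E_{G_1}[(x-\theta)^2]}{2\sigma^2}}.
\]

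Next I would compute the second moment. Expanding the square and using that $G_1$ has mean zero,
\[
\E_{G_1}[(x-\theta)^2] = x^2 - 2x\,\E_{G_1}[\theta] + \E_{G_1}[\theta^2] = x^2 + \E_{G_1}[\theta^2] \le x^2 + V,
\]
where the final inequality is the variance bound. Since $\phi$ is decreasing, replacing the exponent's argument by the larger quantity $x^2 + V$ only decreases the bound, yielding
\[
f_{G_1,\sigma}(x) \ge \frac{1}{\sqrt{2\pi}\sigma} \exp\pr{-\frac{x^2 + V}{2\sigma^2}},
\]
and the reciprocal form follows immediately by inverting. I do not anticipate any real obstacle beyond the step-two subtlety already flagged: applying Jensen to the outer convex function $\phi$ rather than to the non-convex Gaussian kernel, and then invoking monotonicity of $\phi$ to pass from the exact second moment to its uniform upper bound $x^2 + V$.
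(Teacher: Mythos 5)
Your proof is correct and is essentially the paper's argument: the paper also applies Jensen's inequality via convexity of the exponential, writing $f_{G_1,\sigma}(x) = \int f_X(x\mid\theta)\,G_1(d\theta) \ge \exp\int \log f_X(x\mid\theta)\,G_1(d\theta)$, which is exactly your step $\E_{G_1}[\phi((x-\theta)^2)] \ge \phi(\E_{G_1}[(x-\theta)^2])$ with the constant $\frac{1}{\sqrt{2\pi}\sigma}$ absorbed into the log. Your subsequent second-moment computation and use of monotonicity to pass to $x^2+V$ likewise match the paper's.
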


\begin{proof}
Observe that, by Jensen's inequality, \[
f_{G_1, \sigma}(x) \equiv \int f_X(x\mid \theta) \, G_1(d\theta) \ge \exp \int \log f_X
(x\mid
\theta) \,G_1(d\theta).
\]
We compute \[
\log f_X(x \mid \theta) = \log\frac{1}{\sqrt{2\pi} \sigma} - \frac{1}{2\sigma^2} 
(x-\theta)^2. 
\]
Note that $\E_{\theta \sim G_1}[(x-\theta)^2] = x^2 - 2x \E_{G_1}[\theta] + \E_
{G_1}\theta^2 \le x^2 + V$.
Thus \eqref{eq:jensen_denom} follows. \end{proof}

\begin{lemma}
\label{lemma:millsratio}
For all $x \ge 0$, $\bar\Phi(x) \le \frac{1}{2} e^{-x^2/2}$. 
\end{lemma}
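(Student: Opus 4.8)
The plan is to exploit the standard integral representation of the complementary Gaussian CDF together with a translation substitution that factors out the target Gaussian. Writing $\bar\Phi(x) = \frac{1}{\sqrt{2\pi}} \int_x^\infty e^{-t^2/2}\,dt$ and substituting $t = x+u$ with $u \ge 0$, I would use the identity $(x+u)^2/2 = x^2/2 + xu + u^2/2$ to obtain $\bar\Phi(x) = \frac{e^{-x^2/2}}{\sqrt{2\pi}} \int_0^\infty e^{-xu}\,e^{-u^2/2}\,du$. This already isolates the factor $e^{-x^2/2}$ appearing on the right-hand side of the claim, reducing the problem to bounding the remaining integral by a constant no larger than $\sqrt{\pi/2}$.

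The key step is then to discard the cross term. Since $x \ge 0$ and $u \ge 0$, we have $e^{-xu} \le 1$, so $\int_0^\infty e^{-xu}\,e^{-u^2/2}\,du \le \int_0^\infty e^{-u^2/2}\,du = \sqrt{\pi/2}$. Multiplying through, the constants collapse exactly: $\frac{1}{\sqrt{2\pi}}\cdot\sqrt{\pi/2} = \frac{1}{2}$, which yields $\bar\Phi(x) \le \frac{1}{2}\,e^{-x^2/2}$, precisely as stated.

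There is essentially no serious obstacle here; this is a routine Mills-ratio estimate. The only points requiring care are verifying that the constants combine to give exactly $1/2$ rather than a larger value, and noting that the bound $e^{-xu}\le 1$ is where the hypothesis $x\ge 0$ is used. It is reassuring that the inequality is tight at $x=0$, where $\bar\Phi(0) = 1/2 = \frac{1}{2}e^0$, so the constant $1/2$ cannot be improved by this method.

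As a backup, one could argue by monotonicity: set $g(x) = \frac{1}{2}e^{-x^2/2} - \bar\Phi(x)$, note $g(0)=0$, and compute $g'(x) = e^{-x^2/2}\bigl(\tfrac{1}{\sqrt{2\pi}} - \tfrac{x}{2}\bigr)$, so that $g$ increases on $[0,\sqrt{2/\pi}]$ and decreases thereafter toward $\lim_{x\to\infty} g(x) = 0$; since a decreasing function approaching $0$ must remain nonnegative, $g\ge 0$ on $[0,\infty)$. I would present the substitution argument as the main proof since it is shorter and avoids the limiting behavior analysis.
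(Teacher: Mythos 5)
Your main argument is correct and takes a genuinely different route from the paper's. You use the exact representation $\bar\Phi(x) = \frac{e^{-x^2/2}}{\sqrt{2\pi}}\int_0^\infty e^{-xu}\,e^{-u^2/2}\,du$ from the shift $t = x+u$, then discard the cross term via $e^{-xu}\le 1$ (valid precisely because $x,u \ge 0$), so that $\bar\Phi(x) \le \frac{1}{\sqrt{2\pi}}\sqrt{\pi/2}\;e^{-x^2/2} = \frac{1}{2}e^{-x^2/2}$ in one step, with no case split. The paper instead argues in two regimes with crossover at $x = \sqrt{2/\pi}$: on $[0,\sqrt{2/\pi}]$ it starts from equality at $x=0$ and compares derivatives, using $\bar\Phi'(x) = -\varphi(x)$ versus $\frac{d}{dx}\frac{1}{2}e^{-x^2/2} = -x\sqrt{\pi/2}\,\varphi(x)$; for $x > \sqrt{2/\pi}$ it invokes the Mills-ratio bound $\bar\Phi(x) \le \varphi(x)/x$. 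Your substitution proof is shorter, avoids the auxiliary Mills-ratio fact (which itself needs a small monotone-integrand argument), and makes the tightness at $x=0$ transparent, since there $e^{-xu} \equiv 1$ and the bound becomes an identity. Your backup argument, by contrast, is essentially the paper's proof in disguise: it has the same crossover point and the same derivative comparison on the first interval, differing only in that you handle the tail by noting $g(x) = \frac{1}{2}e^{-x^2/2} - \bar\Phi(x)$ decreases toward $\lim_{x\to\infty} g(x) = 0$ (so must stay nonnegative) rather than by citing Mills' ratio; that limit argument is sound but requires the limiting behavior you flag. Both routes are valid; the substitution argument is the cleaner one to present, and your instinct to lead with it is right.
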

\begin{proof}
Note that $\bar\Phi(0) = \frac{1}{2}$ and thus the bound holds with equality at $x=0$.
Differentiate, \[
\bar \Phi'(x) = -\varphi(x) \quad \frac{d}{dx} \frac{1}{2} e^{-x^2/2} = -\frac{x}{2} e^
{-x^2/2} = (-x \sqrt{\pi/2}) \varphi(x)
\]
For $x \in [0, \sqrt{2/\pi}]$, \[
\frac{d}{dx}\bar\Phi(x) \le \frac{d}{dx} \frac{1}{2} e^{-x^2/2} \implies \bar\Phi(x) \le 
\frac{1}{2} e^{-x^2/2}. 
\]
Note that since Mill's ratio is bounded by $1/x$, we have that for all $x > 0$ \[
\bar\Phi(x) \le \varphi(x)/x.
\]
Take $x > \sqrt{2/\pi}$, we have that\[
\bar \Phi(x) \le \varphi(x) \sqrt{\frac{\pi}{2}} = \frac{1}{2} e^{-x^2/2}.
\]
Hence the inequality holds for all $ x \ge 0 $.
\end{proof}

\bibliographystyle{ecca}
\bibliography{../main.bib}

\end{document}